\documentclass[12pt]{amsart}
\usepackage{amsthm,amsmath,amsfonts,amssymb,latexsym,amscd,mathrsfs,hyperref,cleveref,graphicx}
\usepackage{setspace}
\usepackage{cite}
\usepackage{phonetic}
\setdisplayskipstretch{2}
\usepackage{cite}
\usepackage{crossreftools}

\makeatletter
\renewcommand{\pod}[1]{\allowbreak\mathchoice
  {\if@display \mkern 18mu\else \mkern 8mu\fi (#1)}
  {\if@display \mkern 18mu\else \mkern 8mu\fi (#1)}
  {\mkern4mu(#1)}
  {\mkern4mu(#1)}
}

\makeatletter
\@namedef{subjclassname@2020}{%
  \textup{2020} Mathematics Subject Classification}
\makeatother

\pdfstringdefDisableCommands{%
    \let\Cref\crtCref
    \let\cref\crtcref
}

\theoremstyle{plain}
\newtheorem{thm}{Theorem}
\newtheorem{lem}{Lemma}

\newtheorem*{thm*}{Theorem}
\newtheorem*{cor*}{Corollary}
\newtheorem*{prop*}{Proposition}

\theoremstyle{definition}

\newtheorem*{exa*}{Example}

\Crefname{thm}{Theorem}{theorems}
\Crefname{lem}{Lemma}{lemmas}

\newcommand{\mb}{\mathbb}
\newcommand{\mc}{\mathcal}
\newcommand{\mf}{\mathfrak}
\renewcommand{\mod}{\operatorname{mod}}

\def \a{\alpha}  \def \d{\delta} \def \e{\varepsilon}  \def \k{\kappa}  \def \s{\sigma} \def \t{\theta} 

\makeatletter
\def\widebreve{\mathpalette\wide@breve}
\def\wide@breve#1#2{\sbox\z@{$#1#2$}%
     \mathop{\vbox{\m@th\ialign{##\crcr
\kern0.08em\brevefill#1{0.8\wd\z@}\crcr\noalign{\nointerlineskip}%
                    $\hss#1#2\hss$\crcr}}}\limits}
\def\brevefill#1#2{$\m@th\sbox\tw@{$#1($}%
  \hss\resizebox{#2}{\wd\tw@}{\rotatebox[origin=c]{90}{\upshape(}}\hss$}
\makeatletter

\numberwithin{equation}{section}

\setlength{\leftmargini}{1.75em} \setlength{\leftmarginii}{1.75em}
\renewcommand{\labelenumi}{\setlength{\labelwidth}{\leftmargin}
   \addtolength{\labelwidth}{-\labelsep}
   \hbox to \labelwidth{\theenumi.\hfill}}

\begin{document}
\title{Smooth numbers in Beatty sequences}
\author{Roger Baker}
\address{Department of Mathematics\newline
\indent Brigham Young University\newline
\indent Provo, UT 84602, U.S.A}
\email{baker@math.byu.edu}
%\date{\today}

 \begin{abstract}
Let $\t$ be an irrational number of finite type and let $\psi \ge 0$. We consider numbers in the Beatty sequence of integer parts,
 \[\mc B(x) = \{\lfloor\t n + \psi\rfloor : 
 1 \le n \le x\}.\]
Let $C > 3$. Writing $P(n)$ for the largest prime factor of $n$ and $|\ldots|$ for cardinality, we show that
 \[|\{n\in \mathcal B(x) : P(n) \le y\}| =
 \frac 1\t\, \Psi(\t x, y)\ (1 + o(1))\]
as $x\to\infty$, uniformly for $y \ge (\log x)^C$. Here $\Psi(X,y)$ denotes the number of integers up to $X$ with $P(n) \le y$. The range of $y$ extends that given by Akbal \cite{akb}. The work of Harper \cite{harp} plays a key role in the proof.
 \end{abstract}

\keywords{Beatty sequence, exponential sums over smooth numbers.}

\subjclass[2020]{Primary 11N25; secondary 11L03}

\maketitle

 \section{Introduction}\label{sec:intro}
 
A positive integer $n$ is said to be \textit{$y$-smooth} if $P(n)$, the largest prime factor of $n$, is at most $y$. We write $\mf S(y)$ for the set of $y$-smooth numbers in $\mb N$ and
 \[\Psi(x,y) = |\mf S(y) \cap [1,x]|,\]
where $|\ldots|$ denotes cardinality.

Let $\t > 1$ be an irrational number and $\psi \in [0,\infty)$. Arithmetic properties of the Beatty sequence
 \[\mc B(x) = \{\lfloor\t n + \psi\rfloor :
 1 \le n \le x\}\]
(where $\lfloor$\ \ $\rfloor$ denotes integer part), have been studied in \cite{akb, bakzhao, bankshpar, har}, for example. One may conjecture that for $x$ large and $y=y(x)$ not too small, say $x \ge y \ge (\log x)^C$ where $C > 1$, we have
 \begin{equation}\label{eq1.1}
|\{n\in \mc B(x) : P(n) \le y\}| = \frac 1\t\, \Psi (\t x, y) \quad (1 + o(1))
 \end{equation}
where $o(1)$ denotes a quantity tending to 0 as $x$ tends to infinity. Banks and Shparlinski \cite{bankshpar} obtained \eqref{eq1.1} (in a slightly different form) uniformly for
 \[\exp((\log x)^{2/3+\e}) \le y \le x.\]
(We write $\e$ for an arbitrary positive number.) Under the additional condition that $\t$ is of finite type, Akbal \cite{akb} obtained \eqref{eq1.1} uniformly for
 \begin{equation}\label{eq1.2}
\exp((\log\log x)^{5/3+\e}) \le y \le x.
 \end{equation}
In the present paper we extend the range \eqref{eq1.2}.

 \begin{thm}\label{thm:irrationalnofinitetype}
Let $\t > 1$ be an irrational number of finite type and $\psi \ge 0$. Then \eqref{eq1.1} holds uniformly for
 \[(\log x)^{3+\e} \le y \le x.\]
 \end{thm}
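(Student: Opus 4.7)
My plan is to reduce the problem to a bound on minor-arc exponential sums over $y$-smooth numbers and then invoke Harper's estimate from \cite{harp}. Set $\a = 1/\t$. A standard calculation (cf.\ \cite{akb, bankshpar}) shows that an integer $m \in [1, \t x + \psi]$ lies in $\mc B(x)$ iff $\{\a m + \g\}$ belongs to a fixed interval of length $\a$ in $\mb R/\mb Z$, where $\g$ is a real constant depending only on $\psi$. Writing $X = \t x + \psi$ and letting $\chi$ be the indicator of this interval, one has
\[
|\{n \in \mc B(x) : n \in \mf S(y)\}| = \sum_{\substack{m \le X \\ m \in \mf S(y)}} \chi(\a m + \g) + O(1).
\]

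Next, I would apply Vaaler's trigonometric approximation with a parameter $H = (\log x)^A$ to be chosen:
\[
\chi(u) = \a + \sum_{1 \le |h| \le H} c_h\, e(hu) + R_H(u),
\]
where $|c_h| \le \min(\a, 1/(\pi|h|))$ and $|R_H|$ is dominated by a nonnegative trigonometric polynomial of degree $H$ with mean $O(1/H)$. The constant term contributes the desired main term $\t^{-1}\Psi(\t x, y)$, and the tail $R_H$ contributes at most $O(H^{-1}\Psi(\t x, y))$, which is negligible. Hence the theorem reduces to proving
\[
\sum_{1 \le |h| \le H} \frac{|T(h\a)|}{h} = o\bigl(\Psi(\t x, y)\bigr), \qquad T(\b) := \sum_{\substack{m \le X \\ m \in \mf S(y)}} e(\b m).
\]

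Here is where the work of Harper \cite{harp} enters: it supplies minor-arc cancellation for $T(\b)$ precisely in the range $y \ge (\log x)^{3+\e}$ appearing in the Theorem. The finite-type hypothesis on $\t$ yields $\|q\a\| \gg_\e q^{-\tau-\e}$ for every $q \ge 1$, so each $h\a$ with $1 \le h \le H$ admits a Dirichlet approximation placing it on a minor arc of the type handled by Harper. Summing the individual bounds over $h$ with the logarithmic weight $1/h$ loses only an $O(\log\log x)$ factor, which is absorbed into Harper's decay provided $A$ is chosen below a suitable threshold.

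The main obstacle is this last step. Harper's estimate must be extracted in a form that is (i) quantitatively strong enough to survive the Vaaler summation over $h$, and (ii) uniform in $\b$ across the whole range $\{h\a : 1 \le h \le H\}$, drawing only on the finite-type assumption on $\t$. The exponent $3+\e$ in the Theorem reflects exactly the range in which Harper's exponential-sum cancellation is available; going below it would require a qualitatively new input.
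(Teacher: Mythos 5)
Your reduction to the exponential sums $T(h/\t)$ is sound and matches the paper's opening moves: the paper uses the Erd\H{o}s--Tur\'an inequality (its Lemma \ref{lem:letu1dotsu}) with $J=[\log x]$ where you use Vaaler's polynomials, and both reduce the problem to showing $\sum_{m\le \t x,\, m\in\mf S(y)} e(jm/\t) \ll \Psi(\t x,y)(\log x)^{-2}$ uniformly for $1\le j\le \log x$. The genuine gap is in the step you yourself flag as ``the main obstacle'': Harper's Theorem 1 cannot simply be invoked here, because it carries the hypothesis $q^2L^2y^3\le x$ (the paper's \eqref{eq1.4}, with $L=2(1+|\d x|)$). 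For the Dirichlet approximation $j\t^{-1}=a/q+\d$ with $q\le x^{1/2}$ and $|\d|\le 1/(qx^{1/2})$, one has $qL\ll q+x^{1/2}$, which can be of order $x^{1/2}$; then $q^2L^2y^3\gg xy^3>x$ and Harper's hypothesis fails. No choice of the Dirichlet parameter repairs this: to force $qL\le (x/y^3)^{1/2}$ one needs approximation level $Q\ge x^{1/2}y^{3/2}$, but Dirichlet then permits $q$ itself to be that large. So there is an unavoidable range of denominators on which Harper's bound, as stated, says nothing, and your plan of ``placing each $h\a$ on a minor arc of the type handled by Harper'' cannot be carried out.

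Closing this gap is exactly the paper's main technical contribution: Theorem \ref{thm:multiplicfunc} reworks Harper's Cauchy--Schwarz argument without assuming \eqref{eq1.4}, at the cost of an extra term $x^{\a/2}(qLy^3)^{1/2}\sqrt{\log y\log q}$ in the bound. One then verifies that the finite-type hypothesis forces $q\gg x^{1/2\k}(\log x)^{-1}$, so the Harper-type term $(qL)^{-\frac 12+\frac 32(1-\a)}M$ saves a power of $x$ once $\a(\t x,y)>2/3$ --- which is where the exponent $3+\e$ actually comes from, via $y\ge(\log x)^{3+\e}$ together with the reduction to $y\le\exp((\log\log x)^2)$ afforded by Akbal's result --- and that the new term is $\ll x^{-1/20}\Psi(\t x,y)$ because $qLy^3\ll x^{5/9}$. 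Your proposal asserts the conclusion of this analysis without supplying either the modified exponential-sum estimate or the verification that its error terms are acceptable; and the claim that the exponent $3+\e$ ``reflects exactly the range in which Harper's cancellation is available'' is not accurate as stated, since the threshold arises from needing $\a>2/3$ to make $-\frac 12+\frac 32(1-\a)$ negative, not from a restriction internal to Harper's theorem.
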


We recall that an irrational number $\t$ is said to be of finite type if ($\|\ldots\|$ denoting distance to the nearest integer) we have
 \[\|m\t\| \ge \frac c{m^\k} \quad (m \in \mb N)\]
for some $c > 0$ and $\k > 0$. We note that if $\t$ is of finite type, then so is $\t^{-1}$.

Theorem \ref{thm:irrationalnofinitetype} depends on an estimate for the exponential sum
 \[S(\t)= \sum_{\substack{n \le x\\[1mm]
 n\in\, \mf S(y)}} e(n\t).\]
Akbal \cite{akb} uses the estimate of Foury and Tenenbaum \cite{fouten}: for $3 \le y \le \sqrt x$, $q \in \mb N$, $(a,q)=1
$, and $\d \in\mb R$,
 \[S\left(\frac aq + \d\right) \ll x (1 + |\d x|)
 \log^3x \left(\frac{y^{1/2}}{x^{1/4}} + \frac 1{q^{1/2}}
 + \left(\frac{qy}x\right)^{1/2}\right).\]
This is unhelpful when, say, $y = (\log x)^C$ $(C > 1)$ since the trivial bound for $S(\t)$ is $x^{1-1/C +o(1)}$ (see e.g. \cite{hilten}).

We use a procedure of Harper \cite[Theorem 1]{harp} and to state his result we introduce some notation. For $2 \le y \le x$, let $u = \frac{\log y}{\log x}$ and let $\a = \a(x,y)$ be the solution of 
 \[\sum_{p \le y} \, \frac{\log p}{p^\a - 1} = \log x.\]
For convenience, when $\t = \frac aq + \d$ as above, we write
 \[L = 2(1 + |\d x|)\]
and
 \begin{equation}\label{eq1.3}
M = u^{3/2} \log u \log x (\log L)^{1/2}(\log q y)^{1/2}.
 \end{equation}
In Theorem 1 of \cite{harp} it is shown that whenever
 \begin{equation}\label{eq1.4}
q^2 L^2 y^3 \le x, 
 \end{equation}
we have
 \begin{equation}\label{eq1.5}
S\left(\frac aq + \d\right) \ll \Psi(x,y)(q(1+|\d x|)^{-\frac 12 + \frac 32 (1 - \a(x,y))}M.
 \end{equation}
We cannot use this bound directly since \eqref{eq1.3} is too restrictive. We adapt Harper's argument to obtain

 \begin{thm}\label{thm:multiplicfunc}
Let $f$ be a completely multiplicative function, $|f(n)|\le 1$ $(n \in \mb N)$. Let
 \[S(f,\t) = \sum_{\substack{n \le x\\[1mm]
 n\in \mf S(y)}} f(n) e(\t n).\]
 \end{thm}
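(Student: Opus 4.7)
The plan is to adapt the proof of Harper's Theorem 1 in \cite{harp} from the case $f\equiv 1$ to a general completely multiplicative $f$ with $|f|\le 1$. Complete multiplicativity is exactly what is needed to preserve the bilinear structure at every step of Harper's argument, while the pointwise bound $|f(n)|\le 1$ lets the absolute-value estimates and mean-square bounds of \cite{harp} pass through verbatim. A second objective is to widen Harper's range beyond \eqref{eq1.4}, so that the bound remains effective when $y$ is as small as $(\log x)^{3+\e}$; this is the range required by the application in Theorem \ref{thm:irrationalnofinitetype}.

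First I would write the Buchstab-type identity
\[
(\log n)\mathbf{1}_{\mf S(y)}(n)
=\sum_{\substack{p^k m = n\\ p\le y}}(\log p)\,\mathbf{1}_{\mf S(y)}(m),
\]
multiply by $f(n)/\log n$, and use $f(n)=f(p)^k f(m)$ to rewrite $S(f,\t)$, after partial summation in $n$, as a weighted combination of
\[
\sum_{p\le y}f(p)\log p\sum_{\substack{m\le x/p\\ m\in\mf S(y)}}f(m)\,e(\t pm),
\]
plus an error from the prime-power terms $k\ge 2$ of order $y^{1/2}\Psi(x,y)$. Applying Cauchy--Schwarz in the prime variable removes $f(p)$ and reduces matters to a second-moment estimate for the linear exponential sum
\[
T(\b)=\sum_{\substack{m\le X\\ m\in\mf S(y)}}f(m)\,e(\b m),
\]
averaged over $\b=\t(p_1-p_2)$ with $p_1,p_2$ primes in $[1,y]$.

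Control of $T(\b)$ is the step where the arguments of \cite{harp} do the real work: a Perron-type contour at the saddle $\re s = \a(x,y)$, combined with a smooth partition of the integral, produces the factors $(q(1+|\d x|))^{-1/2+3(1-\a)/2}$ and $M$ appearing in \eqref{eq1.5}. With $f$ completely multiplicative, the associated Dirichlet series $F(s) = \prod_{p\le y}(1-f(p)p^{-s})^{-1}$ satisfies $|F(\sigma+it)|\le \prod_{p\le y}(1-p^{-\sigma})^{-1}$ on vertical lines, so the same saddle-point bound applies to $|T(\b)|$. The main obstacle I expect is the relaxation of \eqref{eq1.4}: in the parameter range $y\asymp(\log x)^{3+\e}$ we have $u\to 0$ and $\a(x,y)$ close to $0$, so several of Harper's telescoping steps must be reorganized (for example, splitting the contour integral at height depending on $u$ rather than at a fixed height, and pruning the off-diagonal pairs $p_1\ne p_2$ by their gcd) in order to yield a bound valid when $y^3q^2L^2$ is allowed to exceed $x$. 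Once this recalibration is done, assembling the diagonal and off-diagonal contributions delivers the desired estimate for $S(f,\t)$ of the same shape as \eqref{eq1.5}.
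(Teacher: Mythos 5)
There is a genuine gap, and it sits exactly at the step you label as ``where the arguments of \cite{harp} do the real work.'' You propose to bound the twisted sum $T(\b)=\sum_{m\le X,\, m\in\mf S(y)}f(m)e(\b m)$ by a Perron-type contour integral at $\re s=\a(x,y)$, arguing that $F(s)=\prod_{p\le y}(1-f(p)p^{-s})^{-1}$ is under control on vertical lines. But $T(\b)$ is not the partial-sum of a multiplicative function: the additive twist $e(\b m)$ destroys the Euler product, so there is no Dirichlet series to which Perron's formula and a saddle-point analysis apply. The entire difficulty of the theorem is the interaction between the multiplicative structure and the additive character, and that cannot be recovered from $|F(\s+it)|$. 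In Harper's Theorem 1 (and in this paper) the saddle point $\a(x,y)$ enters only through the counting estimate $\Psi(x/d,y)\ll d^{-\a}\Psi(x,y)$ of de la Bret\`eche--Tenenbaum (Lemma \ref{lem:2leyl3x}), applied to smooth numbers in arithmetic progressions and short intervals; no contour integration occurs. Your single-prime Buchstab extraction is also structurally unsuited to the target range: after Cauchy--Schwarz the short bilinear variable is a prime $p\le y=(\log x)^{3+\e}$, while the application forces $q\gg x^{1/2\k}(\log x)^{-1}$, so a variable of length $y$ can never produce the saving $q^{-1/2+\frac32(1-\a)}$ against such a modulus. This is precisely why the Fouvry--Tenenbaum bound quoted in Section \ref{sec:intro} is useless for $y=(\log x)^C$.

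What the paper actually does is keep Harper's combinatorial decomposition but calibrate the short variable to the modulus rather than to a single prime: every smooth $n\in[qLy^2,x]$ is written uniquely as $mn'$ where $m$, the product of the \emph{smallest} prime factors of $n$, satisfies $qLy<m\le qLy^2$ and $m/P(m)\le qLy$. Cauchy--Schwarz is applied in $m$ over dyadic blocks, the geometric progression over $m'$ (of length $\asymp qL$, long enough mod $q$) is summed to give $\min\{2^{j+1}qLy/p,\|(n_1-n_2)p\t\|^{-1}\}$, and the resulting pair-counts $\mf T_j(q)$, $\mf T_j(q/p)$ are estimated by Lemma \ref{lem:2leyl3x}. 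The relaxation of \eqref{eq1.4} is then completely concrete: in Lemma \ref{lem:upperbounds} one retains the terms $y\Psi(x/2^jqLy,y)$ and $2^jqLy^2\Psi(x/2^jqLy,y)$ that Harper could absorb under $q^2L^2y^3\le x$, and these propagate into the second term $x^{\a/2}(qLy^3)^{1/2}\sqrt{\log y\log q}$ of the stated bound --- not into a reorganized contour integral. (Incidentally, for $y=(\log x)^{3+\e}$ one has $\a(x,y)\ge\tfrac23+\eta$, not $\a$ close to $0$ as you assert; that is what makes the extra term harmless in Section \ref{sec:proofthm1}.) To repair your proposal you would need to replace both the decomposition and the analytic treatment of the inner sum along these lines.
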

Let $q \in \mb N$, $(a,q) = 1$, and $\d \in \mb R$. Then, with $\a = \a(x,y)$, we have
 \[S\left(f,\frac aq + \d\right) \ll \Psi(x,y) \left\{
 (q(1+|\d x|)^{-\frac 12 + \frac 32\, (1 - \a)}
 M + x^{\a/2}(q Ly^3)^{\frac 12} \sqrt{\log y\log q}\right\}.\]

To save space, we refer frequently to \cite{harp} in our proof of Theorem \ref{thm:multiplicfunc} in Section 2. Theorem \ref{thm:irrationalnofinitetype} is deduced in a straightforward manner from Theorem \ref{thm:multiplicfunc} in Section 3.

The factor $f(n)$ in Theorem \ref{thm:multiplicfunc} is not needed elsewhere in the paper, but it requires no significant effort to include it.

I would like to thank Adam Harper for helpful comments concerning his proof of \eqref{eq1.5}.

\section{Proof of Theorem \ref{thm:multiplicfunc}.}\label{sec:proofthm2}

 \begin{lem}\label{lem:2leyl3x}
Let $2 \le y \le x$ and $d \ge 1$. Then we have 
 \[\Psi \left(\frac xd, y\right) \ll \frac 1{d^{\a(x,y)}}\
 \Psi(x,y).\]
 \end{lem}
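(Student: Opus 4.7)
The plan is to combine Rankin's inequality with the Hildebrand--Tenenbaum saddle-point asymptotic. Writing $\zeta(\sigma,y) := \prod_{p \le y}(1-p^{-\sigma})^{-1}$, Rankin's trick gives, for any $\sigma > 0$,
\[\Psi(x/d, y) \le (x/d)^\sigma \zeta(\sigma, y),\]
and choosing $\sigma = \alpha := \alpha(x,y)$ yields the clean upper bound
\[\Psi(x/d, y) \le d^{-\alpha}\, x^\alpha\, \zeta(\alpha, y).\]
To convert this into the claimed bound $\ll d^{-\alpha}\Psi(x,y)$, one would like $x^\alpha\zeta(\alpha,y) \ll \Psi(x,y)$, but naively this fails: the saddle-point asymptotic $\Psi(x,y)\asymp x^\alpha\zeta(\alpha,y)/(\alpha\sqrt{\phi_2(\alpha)})$, where $\phi_2(\sigma):=\sum_{p\le y}p^\sigma(\log p)^2/(p^\sigma-1)^2$, shows these two quantities differ by the generally unbounded factor $\alpha\sqrt{\phi_2(\alpha)}$.

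The resolution is to apply the saddle-point asymptotic to $x/d$ rather than to $x$. Let $\alpha' := \alpha(x/d, y)$. Differentiating the defining equation $\sum_{p\le y}\log p/(p^\sigma-1)=\log X$ shows that $\alpha(X,y)$ is the unique minimizer of $\sigma\mapsto X^\sigma\zeta(\sigma,y)$, since $\log\bigl(X^\sigma\zeta(\sigma,y)\bigr)$ is strictly convex with second derivative $\phi_2(\sigma)>0$. Hence
\[(x/d)^{\alpha'}\zeta(\alpha', y) \le (x/d)^{\alpha}\zeta(\alpha, y) = d^{-\alpha}\, x^{\alpha}\zeta(\alpha, y),\]
and the saddle-point asymptotic for $\Psi(x/d, y)$ gives
\[\Psi(x/d, y) \asymp \frac{(x/d)^{\alpha'}\zeta(\alpha', y)}{\alpha' \sqrt{\phi_2(\alpha')}} \le \frac{d^{-\alpha}\, x^{\alpha}\zeta(\alpha, y)}{\alpha' \sqrt{\phi_2(\alpha')}}.\]

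The final step, which I expect to be the main technical obstacle, is the comparison $\alpha'\sqrt{\phi_2(\alpha')} \gg \alpha\sqrt{\phi_2(\alpha)}$. Monotonicity of $\alpha(X,y)$ in $X$ (again by implicit differentiation) gives $\alpha' \ge \alpha$, so only $\phi_2(\alpha')\gg\phi_2(\alpha)$ requires argument. I would handle this by first estimating $\alpha'-\alpha$ via implicit differentiation of the defining equation (producing a bound in terms of $\log d$, $\log x$, and $\log y$), and then using that $\phi_2$ is smooth and positive, so it varies by at most a constant factor over $\sigma$-intervals of the resulting length. Granting $\phi_2(\alpha')\asymp\phi_2(\alpha)$, the chain rearranges to
\[\Psi(x/d, y) \ll \frac{d^{-\alpha}\, x^{\alpha}\zeta(\alpha, y)}{\alpha \sqrt{\phi_2(\alpha)}} \asymp d^{-\alpha}\, \Psi(x, y),\]
as claimed. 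For values of $y$ in which the Hildebrand--Tenenbaum asymptotic is not directly applicable (very small $y$), the factor $\alpha\sqrt{\phi_2(\alpha)}$ is bounded, so Rankin's first display is itself tight up to constants and the conclusion follows directly.
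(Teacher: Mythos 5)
The paper gives no proof of this lemma at all --- it is quoted from de la Bret\`eche and Tenenbaum \cite{bretten}, Th\'eor\`eme 2.4(i) --- so your self-contained attempt is necessarily a different route. Unfortunately it has a genuine gap, sitting exactly where you predicted. The Rankin step, the identification of $\alpha(X,y)$ as the minimizer of $X^{\sigma}\zeta(\sigma,y)$, and the monotonicity $\alpha'\ge\alpha$ are all correct; but the comparison $\phi_2(\alpha')\asymp\phi_2(\alpha)$ is false for large $d$, and your proposed justification (smoothness of $\phi_2$ over an interval of length $\alpha'-\alpha$) cannot rescue it because $\alpha'-\alpha$ need not be small. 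For instance, with $y=(\log x)^{3+\e}$ and $x/d=y^{2}$ one has $\alpha(x,y)\approx 2/3$ while $\alpha'=\alpha(y^{2},y)\approx 1$. Quantitatively, the saddle-point estimates give $\phi_2(\alpha(X,y),y)\asymp\bigl(1+(\log X)/y\bigr)\log X\log y$ uniformly for $X\ge y\ge 2$, so in the range $y\ge\log x$ your chain of inequalities loses the factor
\[
\frac{\alpha\sqrt{\phi_2(\alpha)}}{\alpha'\sqrt{\phi_2(\alpha')}}\asymp\Bigl(\frac{\log x}{\log(x/d)}\Bigr)^{1/2},
\]
which is of order $\sqrt{\log x/\log y}$ when $x/d$ is a bounded power of $y$, hence unbounded. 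This is not a harmless corner case: the paper applies the lemma in \eqref{eq2.13} with $x/d=qLy^{2}$, which can be as small as $2y^{2}$. (Your closing remark about very small $y$ is also off: there $\alpha\sqrt{\phi_2(\alpha)}\asymp\sqrt{\pi(y)}$, which is bounded only for bounded $y$.)

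What your argument does establish is the lemma for $d\le x^{1-\delta}$, with constants depending on $\delta$, since then $\log(x/d)\asymp\log x$ and the lost factor is $O_{\delta}(1)$. To cover the full range one must recover the slack you discard in the inequality $(x/d)^{\alpha'}\zeta(\alpha',y)\le(x/d)^{\alpha}\zeta(\alpha,y)$: by strict convexity of $\sigma\mapsto\log\bigl(X^{\sigma}\zeta(\sigma,y)\bigr)$, this inequality is very lossy precisely when $\alpha'$ is far from $\alpha$, and that loss is what compensates for the decrease of $\alpha\sqrt{\phi_2(\alpha)}$. Capturing it requires a second-order analysis near the saddle point of the kind carried out in \cite{bretten}, not just the minimizing property of $\alpha'$; as written, your chain throws that information away and cannot yield the bound uniformly in $d$. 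Given that the paper itself simply cites \cite{bretten}, the cleanest fix is to do the same.
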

 
 \begin{proof}
See de la Bret\`eche and Tenenbaum \cite[Th\'eor\`eme 2.4 (i)]{bretten}
 \end{proof}

We write $p(n)$ for the smallest prime factor of $n\in \mb N$. We begin the proof of Theorem \ref{thm:multiplicfunc} by noting that the result is trivial for $qLy^3 \ge x^\a$. Suppose now that $qLy^3 < x^\a$. Every $y$-smooth number in $[qLy^2,x]$ can be written uniquely in the form $mn$, where
 \[qLy < m \le qLy^2, \ \frac m{P(m)} \le qLy, \ 
 m \in \mf S(y)\]
and
 \[\frac{qLy^2}m \le n \le \frac xm \ , \
 p(n) \ge P(m), \ \ n \in \mf S(y).\]
(We take $m$ to consist of the product of the smallest prime factors of the number.) With $\t = \frac aq + \d$, we have
 \begin{align}
S(f,\t) &= \sum_{\substack{qLy^2 \le n \le x\\[1mm]
n \in \mf S(y)}} f(n) e(n\t) + O(\Psi(qLy^2, y))\label{eq2.1}\\
&= U + O(\Psi(qLy^2,y))\notag
 \end{align}
where
 \[U = \sum_{\substack{
 qLy < m \le qLy^2\\[1mm]
 m/P(m) \le qLy\\[1mm]
 m\in \mf S(y)}} \ \sum_{\substack{
 \frac{qLy^2}m \le n \le \frac xm\\[1mm]
 p(n) \ge P(m)\\[1mm]
 n \in \mf S(y)}} f(mn) e(mn\t).\]
We now decompose $U$ as
 \begin{equation}\label{eq2.2}
U = \sum_{0 \le j \le \frac{\log y}{\log 2}} U_j
 \end{equation}
where
 \[U_j = \sum_{p \le y} \ \ \sum_{\substack{
 2^j qLy < m \le q L y \min(2^{j+1},p)\\[1mm]
 P(m) = p}} f(m) \ \sum_{\substack{
 \frac{qLy^2}m \le n \le \frac xm\\[1mm]
 p(n) \ge p\\[1mm]
 n \in \mf S(y)}} f(n) e (mn\t),\]
noting that if $P(m) = p \le y$, then $m$ is $y$-smooth, and the condition $\frac m{P(m)} \le qLy$ can be written as $m \le qLyp$.

We apply the Cauchy-Schwarz inequality to $U_j$. Let $\sum\limits_m$ denote
 \[\sum_{\substack{
 2^j qLy < m \le qLy\, \min(2^{j+1},p).\\[1mm]
 P(m) = p}}\]
We obtain
 \begin{align*}
U_j &\le \sqrt{\sum_{p\le y} \ \sum_m 1} \
 \sqrt{
 \sum_{2^j \le p \le y}\ \,
 \sum_{\frac{2^jq Ly}p < m' \le \frac{qLy}p\,
 \min(2^{j+1},p)}\Bigg|\sum_{\frac{qLy^2}{m'p} \le
 n \le \frac x{m'p}, p(n) \ge p, n \in \mf S(y)}
 f(n) e(m'pn\t)\Bigg|^2}\\[2mm]
 &\ll \sqrt{\Psi(2^{j+1}qLy, y)} \ \, \sqrt{
 \sum_{2^j \le p \le y}\ \, \sum_{\substack{
 n_1, n_2 \le \frac x{2^jqLy}\\[1mm]
 p(n_1), p(n_2) \ge p\\[1mm]
 n_1, n_2 \in \mf S(y)}} \ \min \left\{\frac{2^{j+1}qLy}p, 
 \frac 1{\|(n_1-n_2)p\t\|}\right\}}
 \end{align*}
For the last step, we open the square and sum the geometric progression over $m'$. We may restrict the sum over primes to $p \ge 2^j$, since otherwise the sum over $m$ is empty. Our final bound here for $U_j$ exactly matches \cite{harp}.

Let
 \[\mf T_j(r) : = \max_{1\le b\le r-1} \
 \sum_{\substack{
 n_1, n_2 \le \frac x{2^jqLy}\\[1mm]
 n_1, n_2 \in \mf S(y)\\[1mm]
 n_1 - n_2 \equiv b \mod r}} \, 1.\]
Just as in \cite{harp}, after distinguishing the cases $p\mid q$ and $p\nmid q$, we arrive at
 \begin{equation}\label{eq2.3}
U_j \ll \sqrt{\Psi(2^{j+1}q Ly, y)} \, (\sqrt S_1 + \sqrt S_2), 
 \end{equation}
with
 \begin{align*}
S_1 = S_1(j): = \sum_{\substack{
2^j \le p\le y\\[1mm]
p\nmid q}} \ & \mf T_j(q) \sum_{b=1}^{q-1} \min \left\{
\frac{2^{j+1}qLy}p\, , \frac qp\right\}\\[2mm]
&+ \sum_{\substack{2^j \le p \le y\\[1mm]
p\mid q}} \mf T_j \left(\frac qp\right) \ \sum_{b=1}^{(q/p)-1}
\min \left\{\frac{2^{j+1}qLy}p , \frac q{pb}\right\}
 \end{align*}
and
 \begin{gather*}
S_2 = S_2(j) : = \sum_{\substack{
2^j \le p \le y\\[1mm]
p\nmid q}} \frac 1p \ \sum_{\substack{
n_1,n_2\le \frac x{2^jqLy}\\[1mm]
n_1-n_2 \equiv 0 \mod q\\[1mm]
n_1,n_2 \in \mf S(y)}} \min\left\{2^{j+1}qLy, \frac 1{|(n_1-n_2)\d|}\right\}\\[2mm]
\sum_{\substack{
2^j \le p \le y\\[1mm]
p\mid q}} \frac 1p \ \sum_{\substack{
n_1, n_2 \le \frac x{2^jqLy}\\[1mm]
n_1-n_2 \equiv 0\mod{q/p}\\[1mm]
n_1, n_2\in \mf S(y)}} \min\left\{2^{j+1}qLy, \frac 1{|(n_1-n_2)\d|}\right\}.
 \end{gather*}

We now depart to an extent from the argument in \cite{harp}; we have extra terms in the upper bounds in Lemma \ref{lem:upperbounds} below which arise because we do not have the upper bound \eqref{eq1.4}.

 \begin{lem}\label{lem:upperbounds}
Let $(\log x)^{1.1} \le y \le x^{1/3}$, $q \ge 1$, and $L = 2(1 + |\d x|)$. Then for any $j$, $0 \le j \le \frac{\log y}{\log 2}$, and any prime $p\mid q$, we have
 \begin{align}
\mf T_j(q) &\ll \frac{\Psi(x/2^j qLy, y)^2}q \ q^{1-\a(x,y)} \log x + y \Psi\left(\frac x{2^jqLy}, y\right)\label{eq2.4}\\
\intertext{and}
\mf T_j\left(\frac qp\right) &\ll \frac{\Psi(x/2^j qLy, y)^2}{q/p} \ \left(\frac qp\right)^{1-\a(x,y)}\log x\label{eq2.5}\\
&\hskip 2.25in + y\Psi\left(\frac x{2^jqLy},y\right).\notag
 \end{align}
Under the same hypotheses, we have
 \begin{align}
\sum_{\substack{
n_1,n_2 \le \frac x{2^jqLy}\\[1mm]
n_1-n_2 \equiv 0 \mod q\\[1mm]
n_1, n_2 \in \mf S(y)}} &\min \left\{2^{j+1} qLy, \frac 1{|(n_1-n_2)\d|}\right\}\label{eq2.6}\\[2mm]
&\ll 2^j y\, \Psi \left(\frac x{2^j qLy}, y\right)^2 (qL)^{1-\a(x,y)} \log x \log L \notag\\[2mm]
&\hskip .75in + 2^j qLy^2 \Psi\left(\frac x{2^jqLy},y\right)\notag
 \end{align}
and, for any prime $p\mid q$,

 \begin{align}
\sum_{\substack{
n_1,n_2 \le \frac x{2^jqLy}\\[1mm]
n_1-n_2 \equiv 0 \mod{q/p}\\[1mm]
n_1, n_2 \in \mf S(y)}} &\min \left\{2^{j+1} qLy, \frac 1{|(n_1-n_2)\d|}\right\}\label{eq2.7}\\[2mm]
&\ll p2^j y\, \Psi \left(\frac x{2^j qLy}, y\right)^2 \left(\frac{qL}p\right)^{1-\a(x,y)} \log x \log L \notag\\[2mm]
&\hskip .75in + 2^j qLy^2\, \Psi\left(\frac x{2^jqLy},y\right).\notag
 \end{align}
 \end{lem}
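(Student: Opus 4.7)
My plan is to follow Harper's treatment of the analogous quantities in \cite{harp}, splitting each estimate into a ``favourable'' range, where Harper's smooth-number machinery delivers the first summand of \eqref{eq2.4}--\eqref{eq2.7}, and a complementary ``crude'' range, where \eqref{eq1.4} would have been needed: in this range a trivial bound produces the second summand. Throughout I set $N := x/(2^j qLy)$ and invoke the monotonicity of $\alpha(\cdot, y)$ in its first argument (immediate from the defining relation for $\alpha$) to apply Lemma \ref{lem:2leyl3x} with exponent $\alpha(x, y)$ at any scale $N \le x$.

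For \eqref{eq2.4}, each pair contributing to $\mf T_j(q)$ (for the extremal $b$) satisfies $n_1 = n_2 + b + kq$ with $0 \le k \le N/q$. When $q \ge N/y$ the trivial bound $\mf T_j(q) \le (N/q + 1)\Psi(N, y) \ll y\Psi(N, y)$ yields the second summand. When $q < N/y$ I would follow Harper's argument in \cite{harp}---a divisor identity, a Parseval-type manipulation, and Lemma \ref{lem:2leyl3x}---to obtain an estimate of the form $\mf T_j(q) \ll \Psi(N, y)\Psi(N/q, y) \log x$; Lemma \ref{lem:2leyl3x} then converts $\Psi(N/q, y)$ into $\Psi(N, y)\, q^{-\alpha(x, y)}$, giving the first summand of \eqref{eq2.4}. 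The bound \eqref{eq2.5} is the same argument with $q/p$ replacing $q$.

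For \eqref{eq2.6}, I would decompose the sum by $d := n_1 - n_2$. The diagonal $d = 0$ contributes at most $2^{j+1}qLy \cdot \Psi(N, y)$, which is absorbed into the second summand. For $d = mq$ with $m \ne 0$ I would partition dyadically on $|m|$ around the crossover where the two branches of the minimum coincide. On each dyadic block the weight is essentially a single scalar and the admissible pairs are counted by a truncated analogue of \eqref{eq2.4}: summing the main contributions geometrically over the $O(\log L)$ blocks produces the factor $\log L$ and the first summand of \eqref{eq2.6}, while the trivial contributions (from the $y\Psi(N, y)$ term of \eqref{eq2.4}) form a geometric series in the weight, dominated by its maximal term $2^{j+1}qLy \cdot y\Psi(N, y)$ up to a constant, which is the second summand of \eqref{eq2.6}. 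The proof of \eqref{eq2.7} is the parallel dyadic argument with modulus $q/p$; the extra factor $p$ appears from $p \cdot (qL/p)^{1-\alpha(x, y)} = p^{\alpha(x, y)}(qL)^{1-\alpha(x, y)}$, reflecting the coarser modulus.

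The main obstacle I anticipate is the bookkeeping in \eqref{eq2.6}--\eqref{eq2.7}: specifically, ensuring that the trivial contributions from \eqref{eq2.4}--\eqref{eq2.5} do not accumulate a spurious $\log L$ factor when summed over dyadic blocks. The key point to verify is that the weight is capped at $2^{j+1}qLy$ and halves past the crossover, so the relevant sum is geometric rather than logarithmic in $L$. A secondary difficulty is adapting Harper's bound, written in \cite{harp} at the scale $x$, to the scale $N$ used here; this is why monotonicity of $\alpha(\cdot, y)$ is needed.
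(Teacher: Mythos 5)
Your plan is essentially the paper's proof: both reduce everything to Harper's local count of smooth numbers in arithmetic progressions, keep the trivial $y$-term (which survives because \eqref{eq1.4} is not assumed) to produce the second summands, convert $\Psi(N/q,y)$ to $q^{-\alpha(x,y)}\Psi(N,y)$ via Lemma \ref{lem:2leyl3x} and the monotonicity of $\alpha(\cdot,y)$, and handle \eqref{eq2.6}--\eqref{eq2.7} by a dyadic decomposition in $|n_1-n_2|$ about the crossover of the minimum, with the $O(\log L)$ blocks giving the $\log L$ factor, the weights decaying geometrically past the crossover so the trivial contributions do not accumulate, and the $p^{\alpha}$ loss accounting for the coarser modulus $q/p$. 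Your reorganization of \eqref{eq2.4} into the cases $q\ge N/y$ and $q<N/y$ (rather than carrying both terms of the local count throughout, as the paper does) is a cosmetic difference only.
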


 \begin{proof}
We have
 \[\mf T_j(q) = \max_{1\le b\le q-1} \, 
 \sum_{\substack{
 n_1\le x/2^jqLy\\[1mm]
 n_1\in \mf S(y)}} \ \sum_{\substack{
 n_2 \le x/2^j qLy\\[1mm]
 n_2\equiv n_1-b \mod q\\[1mm]
 n_2 \in \mf S(y)}} 1,\]
and just as in \cite{harp} the inner sum is
 \[\ll y + \frac{\Psi(x/2^jqLy)}q \, 
 q^{1-\a(x,y)} \log x.\]
This leads to the bound \eqref{eq2.4} on summing over $n_1$. The bound \eqref{eq2.5} follows in exactly the same way.

To prove \eqref{eq2.6}, \eqref{eq2.7} we distinguish two cases. If $|\d| \le 1/x$, then $L = 2(1 + |\d x|) \asymp 1$, and the bounds can be proved exactly as above on bounding $\min\left\{2^{j+1} qLy, \frac 1{|(n_1-n_2)\d|}\right\}$ by $2^{j+1} qLy \ll 2^j qy$.

Now suppose $|\d| > 1/x$, so that $L\asymp |\d x|$. We partition the sum in \eqref{eq2.6} dyadically. Let us use $\sum^\dag$ to denote a sum over pairs of integers $n_1$, $n_2 \le \frac x{2^jqLy}$ that are $y$-smooth and satisfy $n_1- n_2 \equiv 0\mod q$. Then we have 
 \begin{align*}
\sideset{}{^\dag}\sum_{|n_1 - n_2| \le \frac x{2^j qL^2y}} &\min\left\{2^{j+1} qLy, \frac 1{|(n_1-n_2)\d|}\right\}\\
&\ll 2^j qLy \sum_{\substack{
n_1 \le \frac x{2^j qLy}\\[1mm]
n_1 \in \mf S(y)}} \ \sum_{\substack{
|n_2 - n_1| \le \frac x{2^j qL^2y}\\[1mm]
n_2 \in \mf S(y)\\[1mm]
n_2\equiv n_1 \mod q}} 1.
 \end{align*}
Following \cite{harp}, but as above \textit{not} absorbing a term $y$, the last expression is
 \begin{align*}
&\ll 2^j qLy \sum_{\substack{
n_1 \le \frac x{2^jqLy}\\[1mm]
n_1\in \mf S(y)}} \left\{\frac{\Psi(x/2^jqLy,y)}{qL}\, (qL)^{1-\a} \log x + y\right\}\\[2mm]
&\ll 2^j y \Psi(x/2^j qLy, y)^2 (qL)^{1-\a} \log x + 2^j q Ly^2 \Psi(x/2^j qLy, y).
 \end{align*}
Similarly, for any $r$, $0 \le r \le (\log L)/\log 2$, we have
 \begin{align*}
&\sideset{}{^\dag}\sum_{\frac{2^r}{2^jqL^2y} < |n_1-n_2|\le \frac{2^{r+1}x}{2^jqL^2y}} \min\left\{2^{j+1} qLy, \frac 1{|(n_1-n_2)\d|}\right\}\\[2mm]
&\hskip 1in \ll \frac{2^jqLy}{2^r} \ \sum_{\substack{
n_1 \le \frac x{2^jqLy}\\[1mm]
n_1\in \mf S(y)}} \ \sum_{\substack{
|n_2 - n_1| \le \frac{2^{r+1}x}{2^jqL^2y}\\[1mm]
n_2\in \mf S(y)\\[1mm]
n_2\equiv n_1\mod q}} 1\\[2mm]
&\qquad\ll \frac{2^jqLy}{2^r} \ \sum_{\substack{
n_1 \le \frac x{2^jqLy}\\[1mm]
n_1 \in \mf S(y)}} \left\{\Psi\left(\frac x{2^jqLy},y\right)\left(\frac{2^r}{qL}\right)^\a \log x + y\right\}\\[2mm]
&\qquad\ll 2^j y \Psi\left(\frac x{2^jqLy}, y\right)^2 \left(\frac{2^r}{qL}\right)^\a \log x + \frac{2^jqLy^2}{2^r} \, \Psi\left(\frac x{2^jqLy}, y\right).
 \end{align*}
The bound \eqref{eq2.6} follows on summing over $r$.

The bound \eqref{eq2.7} follows in exactly the same way; we lose a factor $p^\a$ in the first term on the right-hand side in \eqref{eq2.7} because of the weaker congruence condition. This completes the proof of Lemma \ref{lem:upperbounds}.
 \end{proof}

We now assemble our bounds to prove Theorem \ref{thm:irrationalnofinitetype}. As noted in \cite{harp}, for any $p \le y$ we have
 \begin{equation}\label{eq2.8}
\sum_{b=1}^{q-1} \min\left\{\frac{2^{j+1}qLy}p, \frac qb\right\} \ll q \log q 
 \end{equation}
and, for $p\mid q$,
 \begin{equation}\label{eq2.9}
\sum_{b=1}^{(q/p)-1} \min\left\{\frac{2^{j+1}qLy}p, \frac q{pb}\right\} \ll \frac qp \log q.
 \end{equation}

Let
\[A_j = \Bigg(\sum_{\substack{
2^j \le p \le y\\[1mm]
p\nmid q}} \frac 1p + \sum_{\substack{
2^j \le p \le y\\[1mm]
p\mid q}} 1\Bigg) 2^jy \Psi \left(
\frac x{2^jqLy}\, , y\right) (qL)^{1-\a} \log x \log L.\]
We deduce from \eqref{eq2.1}--\eqref{eq2.3}, Lemma \ref{lem:upperbounds}, and \eqref{eq2.8}--\eqref{eq2.9} that
 \begin{equation}\label{eq2.10}
S\left(f, \frac aq +\d\right) \ll \Psi(qLy^2, y) + A + B_1 + B_2 
 \end{equation}
where
 \begin{align*}
A &= \sum_{0 \le j \le \frac{\log y}{\log 2}} \sqrt{\Psi(2^{j+1}qLy,y)} \sqrt{\frac y{\log y}\, \log q \Psi\left(\frac x{2^jqLy},y\right)^2 q^{1-\a} \log x}\\[2mm]
&\quad + \sum_{0 \le j \le \frac{\log y}{\log 2}} \sqrt{\Psi(2^{j+1}qLy, y)} \sqrt{A_j},\\[2mm]
B_1 &= \sum_{0 \le j \le \frac{\log y}{\log 2}} \sqrt{\Psi(2^{j+1}q L y, y)} \ \sqrt{\sum_{2^j \le p \le y} q \log q \cdot y \Psi\left(\frac x{2^jqLy}, y\right)},\\
\intertext{and}
B_2 &= \sum_{0 \le j \le \frac{\log y}{\log 2}} \sqrt{\Psi(2^{j+1}qLy, y)} \sqrt{\frac{qLy^3}{\log y}\, \Psi\left(\frac x{2^jqy}, y\right)}.
 \end{align*}
For the estimation of $A$ we can appeal to \cite{harp}:
 \begin{equation}\label{eq2.11}
A \ll \Psi(x,y)(qL)^{-\frac 12 + \frac 32\, (1-\a(x,y))}M. 
 \end{equation}
The $\Psi$ functions in $B_1$ and $B_2$ are estimated using Lemma \ref{lem:2leyl3x}. Thus
 \begin{align*}
\sqrt{\Psi\left(\frac x{2^jqLy}, y\right)\Psi(2^{j+1}qLy,y)} &\ll \Psi(x,y)(2^jqLy)^{-\a/2} \left(\frac x{2^{j+1}qLy}\right)^{-\a/2}\\[2mm]
&\ll \Psi(x,y)x^{-\a/2},
 \end{align*}
leading to (a slightly wasteful) bound
 \begin{equation}\label{eq2.12}
B_1 + B_2 \ll \Psi(x,y) \sqrt{y^3 qL\log q \log y}\ x^{-\a/2}.
 \end{equation}
The remaining term to be estimated is $\Psi(qLy^2, y)$, and Lemma \ref{lem:2leyl3x} gives
 \begin{equation}\label{eq2.13}
\Psi(qLy^2,y) \ll \left(\frac x{qLy^2}\right)^{-\a} \Psi(x,y).
 \end{equation}
This term can be absorbed into the right-hand side of \eqref{eq2.12} since
 \begin{equation}\label{eq2.14}
(qLy^3)^{\a - \frac 12} \ll (qLy^3)^{\a/2} \ll x^{\a/2}.
 \end{equation}
Theorem \ref{thm:multiplicfunc} follows on combining \eqref{eq2.10}--\eqref{eq2.14}.
 
 \section{Proof of Theorem \ref{thm:multiplicfunc}}\label{sec:proofthm1}
 
 \begin{lem}\label{lem:letu1dotsu}
Let $u_1,\ldots, u_N \in \mb R$. Then for any $J \in \mb N$ and any $\rho \le \s \le \rho + 1$, we have
 \begin{align*}
\big| |\{1 \le n \le N &: u_n \in [\rho,\s] \mod 1\}| - (\s-\rho)N\big|\\
&\qquad \le \frac N{J+1} + 3 \sum_{j=1}^J \ \frac 1j \, \Bigg|\sum_{n=1}^N e(ju_n)\Bigg|.
 \end{align*}
 \end{lem}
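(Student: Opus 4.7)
My plan is to prove this standard Erdős–Turán–Vinogradov discrepancy inequality by sandwiching the indicator function of $[\rho,\sigma]\bmod 1$ between trigonometric polynomials of degree $J$ with well-controlled Fourier coefficients. Write $\chi$ for the $1$-periodic indicator of $[\rho,\sigma]$ and set $\ell=\sigma-\rho$, so the left-hand side is $\bigl|\sum_{n=1}^N\chi(u_n)-\ell N\bigr|$. Fix $\beta\asymp 1/(J+1)$ and construct Vinogradov-type trapezoidal majorants and minorants $g^\pm:\mathbb{R}/\mathbb{Z}\to[0,1]$: let $g^+$ equal $1$ on $[\rho,\sigma]$, vanish outside $[\rho-\beta,\sigma+\beta]$, and be linear on the two transition intervals of width $\beta$; define $g^-$ analogously, equal to $1$ on $[\rho+\beta,\sigma-\beta]$ and $0$ outside $[\rho,\sigma]$. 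Then $g^-\le\chi\le g^+$ and $\int_0^1 g^\pm = \ell\pm\beta$.

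The analytic input is that $g^\pm$ are Lipschitz, so their Fourier series converge absolutely; realising each trapezoid as the convolution of an interval indicator with a scaled bump yields the standard bounds
\[
|\widehat{g}^\pm(j)|\ \le\ \min\!\left(\ell+\beta,\ \frac{1}{\pi|j|},\ \frac{1}{\pi^2 j^2\beta}\right)\qquad (j\ne 0).
\]
Evaluating at $u_n$, summing, and sandwiching between $g^-$ and $g^+$, I obtain
\[
\Bigl|\sum_{n=1}^N \chi(u_n)-\ell N\Bigr|\ \le\ \beta N+\sum_{j\ne 0}|\widehat{g}^\pm(j)|\,|S_j|,\qquad S_j:=\sum_{n=1}^N e(ju_n).
\]

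The final step is to split the Fourier sum at $|j|=J$. For $0<|j|\le J$, I would apply $|\widehat{g}^\pm(j)|\le 1/(\pi|j|)$ and pair the $\pm j$ contributions (noting $|S_{-j}|=|S_j|$) to obtain a bound of shape $\sum_{j=1}^J(2/(\pi j))|S_j|$, with the small slack absorbed into the stated factor $3/j$. For $|j|>J$, I would use the quadratic decay $|\widehat{g}^\pm(j)|\ll 1/(j^2\beta)$ and the trivial bound $|S_j|\le N$; the tail then contributes $\ll N/(\beta J)\asymp N/(J+1)$, which combines with the $\beta N$ term to produce the stated $N/(J+1)$. There is no substantive obstacle here; the only mildly delicate point is calibrating $\beta$ and the trapezoid slopes so that the absolute constants in the final bound land exactly at $1$ and $3$, rather than as unspecified $O(1)$ factors—a bookkeeping issue, not a mathematical one.
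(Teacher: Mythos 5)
The paper does not prove this lemma at all; it is quoted verbatim from \cite{rcb}, Theorem 2.1 (the Erd\H{o}s--Tur\'an inequality with the explicit constants $1$ and $3$). Your overall strategy --- sandwich the periodic indicator between a majorant and a minorant and pass to Fourier coefficients --- is the right one, but the specific construction you chose (a single trapezoid with transition width $\beta$) cannot deliver the stated bound, and the point you dismiss as ``bookkeeping'' is in fact the heart of the matter. A trapezoid is not a trigonometric polynomial, so after splitting at $|j|=J$ you are left with an infinite tail $\sum_{|j|>J}|\widehat g^{\pm}(j)|\,|S_j|$ that can only be estimated trivially by $|S_j|\le N$. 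With your own coefficient bound $|\widehat g^{\pm}(j)|\le 1/(\pi^2 j^2\beta)$, this tail has size $\asymp N/(\beta J)$; combined with the width penalty $\beta N$, the best achievable total is $\asymp N/\sqrt{J}$ (at $\beta\asymp J^{-1/2}$), and your proposed choice $\beta\asymp 1/(J+1)$ makes the tail $\asymp N$, i.e.\ worthless. Iterated smoothing (Vinogradov's $r$-fold convolution, giving decay $|j|^{-r-1}$) only improves this to $NJ^{-r/(r+1)}$, or to $\asymp N\log J/J$ if $r$ grows with $J$; no member of this family yields $N/(J+1)$ with constant $1$.

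The standard proofs eliminate the tail entirely by using majorants and minorants that are themselves trigonometric polynomials of degree at most $J$: either the original Erd\H{o}s--Tur\'an construction built from Fej\'er kernels, or the Beurling--Selberg--Vaaler extremal functions, for which $\int_0^1(g^{+}-\chi)=1/(J+1)$, $|\widehat g^{\pm}(j)-\widehat\chi(j)|\le 1/(J+1)$ for $0<|j|\le J$, and $\widehat g^{\pm}(j)=0$ for $|j|>J$. The coefficient of $|S_j|$ is then at most $2\bigl(1/(J+1)+1/(\pi j)\bigr)\le (2+2/\pi)/j\le 3/j$, which is exactly where the constants $1$ and $3$ come from. (Incidentally, the weaker bound $N/\sqrt{J}+\cdots$ that your construction does give would still suffice for the application in Section 3, where $J=\lfloor\log x\rfloor$ and any $o(\Psi(\theta x,y))$ error is acceptable; but it does not prove the lemma as stated.)
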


 \begin{proof}
See e.g. \cite{rcb}, Theorem 2.1.
 \end{proof}
 
 \begin{proof}[Proof of Theorem \ref{thm:irrationalnofinitetype}]
In view of the result of \cite{akb} cited above, we may suppose that
 \begin{equation}\label{eq3.1}
(\log x)^{3+\e} \le y \le \exp((\log \log x)^2). 
 \end{equation}
We note that $\lfloor \t n + \psi\rfloor = m$ for some $m$ if and only if
 \begin{equation}\label{eq3.2}
0 < \left\{\frac{m+1-\psi}\t\right\} \le \frac 1\t, 
 \end{equation}
so that, applying Lemma \ref{lem:letu1dotsu},
 \begin{align*}
\sum_{n\in \mc B(x,y)} 1 &= \sum_{\substack{
m \le \t x\\[1mm]
m\in \mf S(y)\\[1mm]
\eqref{eq3.2}\, holds}} 1 + O(1)\\[2mm]
&= \t^{-1} \Psi(\t x, y) + O\Bigg(
\frac{\Psi(\t x, y)}{\log x} + \sum_{j=1}^{[\log x]} \ \frac 1j \Bigg|\sum_{\substack{
m \le \t x\\[1mm]
m\in \mf S(y)}} e\left(\frac{jm}\t\right)\Bigg|\Bigg) 
 \end{align*}
For our purposes, then, it suffices to show that
 \begin{equation}\label{eq3.3}
\sum_{\substack{
m \le \t x\\[1mm]
m\in \mf S(y)}} e(j\t^{-1}m) \ll \Psi(\t x,y)(\log x)^{-2}
 \end{equation}
uniformly for $1 \le j \le \log x$. We deduce this from Theorem \ref{thm:multiplicfunc}.

By Dirichlet's theorem there is $q \in \mb N$, $1 \le q \le x^{1/2}$, and $a \in \mb N$ with $(a,q)= 1$ such that
 \[\d : = j\t^{-1} - \frac aq \in \left[-
 \frac 1{qx^{1/2}}, \frac 1{qx^{1/2}}\right].\]
Now
 \[|qj\t^{-1}-a| \ge \frac{c_1}{(qj)^\k}\]
for positive constants $c_1$ and $\k$, hence
 \[x^{-1/2} \ge q|\d| \ge \frac{c_1}{(qj)^\k}\]
giving
 \[q \gg x^{1/2\k}(\log x)^{-1}.\]
We apply Theorem \ref{thm:multiplicfunc} with $j\t^{-1}$ in place of $\t$, and $\t x$ in place of $x$. Note that
 \[qL = 2q(1 + |\d \t x|) \ll x^{1/2}\]
so that, in view of \eqref{eq3.1},
 \begin{equation}\label{eq3.4}
qLy^3 \ll x^{5/9}.
 \end{equation}
Now, since $y \ge (\log x)^{3 + \e}$, we have
 \[\a(\t x, y) \ge \frac 23 + \eta \quad
 (\eta = \eta(\e) > 0);\]
see \cite{hilten}, for example. Thus, abbreviating $\a(\t x, y)$ to $\a$,
 \[q^{-\frac 12 + \frac 32\, (1-\a)} \ll q^{-\eta}
 \ll x^{-\eta/3\k}.\]
Now, with $M$ as in \eqref{eq1.3} with $\t x$ in place of $x$,
 \begin{equation}\label{eq3.5}
q^{-\frac 12 + \frac 32\, (1-\a)}M \ll x^{-\eta/4\k}.
 \end{equation}
Next, recalling \eqref{eq3.4},
 \begin{equation}\label{eq3.6}
(\t x)^{-\a/2}(qLy^3)^{1/2}\sqrt{\log y\log q} \ll x^{-1/20}.
 \end{equation}
Combining \eqref{eq3.5}, \eqref{eq3.6} we obtain \eqref{eq3.3}. This completes the proof of Theorem \ref{thm:irrationalnofinitetype}.
 \end{proof}


\begin{thebibliography}{www}
\bibitem{akb} Y. Akbal, A note on values of Beatty sequences that are free of large prime factors, \textit{Colloq. Math.} \textbf{160} (2020), 53--63.

\bibitem{rcb} R.C. Baker, \textit{Diophantine Inequalities}, Clarendon Press, Oxford, 1986.

\bibitem{bakzhao} R.C. Baker and L. Zhao, Gaps between primes in Beatty sequences, \textit{Acta Arith.} \textbf{172} (2016), 207--242.

\bibitem{bankshpar} W.D. Banks and I.E. Shparlinski, Prime divisors in Beatty sequences, \textit{J. Number Theory} \textbf{123} (2007), 413--425.

\bibitem{bretten} R. de la Br\'et\`eche and G. Tenenbaum, Propri\'et\'es statistiques des entiers friables, \textit{Ramanujan J.} \textbf{9} (2005), 139--202.

\bibitem{fouten} E. Fouvry and G. Tenenbaum, Entiers sans grand facteur premier en progressions arithmetiques, \textit{Proc. London Math. Soc.} \textbf{63} (1991), 449--494.

\bibitem{har} G. Harman, Primes in Beatty sequences in short intervals, \textit{Mathematika} \textbf{62} (2016), 572--586.

\bibitem{harp} A. Harper, Minor arcs, mean values and restriction theory for exponential sums over smooth numbers, \textit{Compos. Math.} \textbf{152} (2016), 1121--1158.

\bibitem{hilten} A. Hildebrand and G. Tenenbaum, Integers without large prime factors, \textit{J. Th\'eor. Nombres Bordeaux} \textbf{5} (1993), 411--484.

 \end{thebibliography}
 \end{document}